\def\rightharpoonfill@{\arrowfill@\relbar\relbar\rightharpoonup}
\DeclareRobustCommand{\overrightharpoon}{\mathpalette{\underarrow@\rightharpoonfill@}}
\begin{document}
\newcommand{\beq}{\begin{equation}}
\newcommand{\eneq}{\end{equation}}
\newtheorem{thm}{Theorem}[section]
\newtheorem{coro}[thm]{Corollary}
\newtheorem{lem}[thm]{Lemma}
\newtheorem{prop}[thm]{Proposition}
\newtheorem{defi}[thm]{Definition}
\newtheorem{rem}[thm]{Remark}
\newtheorem{cl}[thm]{Claim}
\title{Analytic solutions for the approximation of $p$-Laplacian problem}
\author{Xiaojun Lu$^{1}$\ \ \ \ Xiaofen Lv$^2$}
\pagestyle{fancy}                   
\lhead{X. Lu and X. Lv}
\rhead{Analytic solution for $p$-Laplacian problem} 
\thanks{Corresponding author: Xiaojun Lu, Department of Mathematics \& Jiangsu Key Laboratory of
Engineering Mechanics, Southeast University, 210096, Nanjing, China}
\thanks{Email addresses:  lvxiaojun1119@hotmail.de(Xiaojun Lu), lvxiaofen0101@hotmail.com(Xiaofen Lv)}
\thanks{Keywords: $p$-Laplacian, complementary variational principle, canonical duality method}
\thanks{Mathematics Subject Classification: 35J20, 35J60, 49K20, 80A20}
\date{}
\maketitle
\begin{center}
1. Department of Mathematics \& Jiangsu Key Laboratory of
Engineering Mechanics, Southeast University, 210096, Nanjing, China\\
2. Jiangsu Testing Center for Quality of Construction Engineering
Co., Ltd, 210028, Nanjing, China\\
\end{center}
\begin{abstract}
This paper mainly investigates the analytic solutions for the
approximation of $p$-Laplacian problem. Through an approximation
mechanism, we convert the nonlinear partial differential equation
with Dirichlet boundary into a sequence of minimization problems.
And a sequence of analytic minimizers can be obtained by applying
the canonical duality theory. Moreover, the nonlinear canonical
transformation gives a sequence of perfect dual
maximization(minimization) problems, and further discussion shows
the global extrema for both primal and dual problems.
\end{abstract}
\section{Introduction}
Fractional order operators are very important mathematical models
describing plenty of anomalous dynamic behaviors in applied sciences
\cite{AH,LIONS1,LU}. Let $\Omega$ be a bounded domain in
$\mathbb{R}^n$ with a sufficiently smooth boundary $\Gamma$. In this
manuscript, we are interested in exploring the analytic solutions
for the approximation of the following fractional order
$p$-Laplacian problem(also called $p$-harmonic problem) with
Dirichlet boundary in higher dimensions,
\beq\left\{\begin{array}{cll}
\Delta_p u:={\rm div}(|\nabla u|^{p-2}\nabla u)+f=0 &\ \text{in}&\ \Omega,\\
u=0&\ \text{on}&\ \Gamma,
\end{array}\right.\eneq
where $f\in L^{p^*}(\Omega)$, $1/p+1/p^*=1$. This nonhomogeneous
problem is intensively studied in many multidisciplinary fields,
such as mean curvature analysis($p=1$), compressible fluid in a
homogeneous isotropic rigid porous medium($p={3}/{2}$),
glaciology($p\in(1,{4}/{3}]$), nonlinear elasticity($p>2$) and
probabilistic games($p=\infty$), etc. Interested readers can refer
to \cite{A,SB,MD,LIONS1,LU} to become familiar with more useful
applications in these respects.\\

Well-posedness and numerical simulations through finite element
methods(FEMs) for $p\in(1,\infty)$ are well established
\cite{WB,AM,GL,U}. General $p$-supersolution in the viscosity sense
for the $p$-Laplacian can be found in \cite{JM}. In particular, if
$\Omega$ is a bounded domain of class $C^{1,\beta}$ for some
$\beta\in(0,1)$ and $f\in L^\infty(\Omega)$, the unique weak
solution $u$ of (1) belongs to $C^1_0(\overline{\Omega})$ \cite{CA}.
In effect, from the minimization problem corresponding to (1)\beq
(\mathscr{P}): \displaystyle\min_{u\in
W^{1,p}_0(\Omega)}\Big\{I[u]:=1/p\int_\Omega |\nabla
u|^pdx-\int_\Omega fudx\Big\},\eneq
it is evident that, for $x\in\overline{\Omega}$, if ${\rm
Sgn}(f)=1$, then there exists a unique positive minimizer; while if
${\rm Sgn}(f)=-1$, then there exists a unique negative minimizer
\cite{SS}. For both Damascelli-Pacella's weak comparison principle
and Cuesta-Taka\v{c}'s strong comparison principle of positive
solutions, please refer to \cite{CA}.\\

Here, we mainly address the nonlinear case
$p\in(1,2)\cup(2,\infty)$. It is worth noticing that, at the
critical points($\nabla u=0$), the operator is degenerate elliptic
for $p>2$ and singular for $p<2$. To tackle the singularity, one
applies the perturbation method(or penalty function method
frequently used in the image processing computation) proposed in
\cite{Evans1,JM}, namely, \beq\left\{\begin{array}{cll}
\Delta_{p,\epsilon} u_\epsilon:={\rm div}(|\nabla u_\epsilon|^2+\chi(p)\epsilon^2)^{(p-2)/2}\nabla u_\epsilon)+f=0 &\ \text{in}&\ \Omega,\\
u_\epsilon=0&\ \text{on}&\ \Gamma,
\end{array}\right.\eneq
where the cut-off function $\chi$ is defined as
\[\chi(p):=\left\{\begin{array}{ll}
1,&p\in(1,2);\\
0,&p\in(2,+\infty).
\end{array}\right.\]
The term $(|\nabla u_\epsilon|^2+\chi(p)\epsilon^2)^{(p-2)/2}$ is
called the {\it transport density}. Clearly, (3) is a highly
nonlinear PDE which is difficult to solve by the direct approach
\cite{JH,LIONS}. As a matter of fact, given the distributed source
term $f\in L^{p^*}(\Omega)$, (3) is the Euler-Lagrange equation of
the minimization problem \beq (\mathscr{P}^{(\epsilon)}):
\displaystyle\min_{u_\epsilon\in
\mathscr{N}}\Big\{I^{(\epsilon)}[u_\epsilon]:=\int_\Omega
L_{\epsilon,p}(\nabla u_\epsilon,u_\epsilon,x)dx=\int_\Omega
H_{\epsilon,p}(\nabla u_\epsilon)dx-\int_\Omega fu_\epsilon
dx\Big\},\eneq where $\mathscr{N}=W^{1,p}_0(\Omega)$, the
function(so-called stored strain energy density) $H_{\epsilon,p}:
\mathbb{R}^n\to\mathbb{R}$ is given by
\[\displaystyle
H_{\epsilon,p}(\overrightarrow{\gamma}):=(|\overrightarrow{\gamma}|^2+\chi(p)\epsilon^2)^{p/2}/p.\]
Moreover,
$L_{\epsilon,p}(\overrightarrow{\gamma},z,x):\mathbb{R}^n\times\mathbb{R}\times
\Omega\to \mathbb{R}$ satisfies the following coercivity inequality
and is convex in the variable $\overrightarrow{\gamma}$, \beq
L_{\epsilon,p}(\overrightarrow{\gamma},z,x)\geq
p_{\epsilon}|\overrightarrow{\gamma}|^p-q_\epsilon,\
\overrightarrow{\gamma}\in\mathbb{R}^n, z\in\mathbb{R}, x\in \Omega,
\eneq for certain constants $p_\epsilon$ and $q_\epsilon$.
$I^{(\epsilon)}$ is called the {\it potential energy functional} and
is sequentially(weakly) lower semicontinuous with respect to the
weak topology of $W^{1,p}_0(\Omega)$. By Rellich-Kondrachov
compactness theorem and Riesz's mean convergence theorem, we have
the following a priori estimates,
\begin{thm}
Assume that $f\in L^{p^*}(\Omega)$ does not change its sign on
$\overline{\Omega}$, that is to say, ${\rm Sgn}(f)=1$ or ${\rm
Sgn}(f)=-1$. Then, there exists at least one minimizer
$\bar{u}_\epsilon\in W^{1,p}_0(\Omega)$ of $(4)$ satisfying \beq
\bar{u}_\epsilon\geq0\ \text{\rm a.e. in}\ \Omega\ \text{\rm for}\
{\rm Sgn}(f)=1,\eneq \beq \bar{u}_\epsilon\leq0\ \text{\rm a.e. in}\
\Omega\ \text{\rm for}\ {\rm Sgn}(f)=-1,\eneq \beq
\bar{u}_\epsilon\to \bar{u}\ \text{in}\ W^{1,p}_0(\Omega)\
\text{as}\ \epsilon\to0, \eneq where $\bar{u}$ is the unique
positive solution to (2).
\end{thm}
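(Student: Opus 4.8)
The plan is to treat the three assertions in turn, relying on the direct method of the calculus of variations for existence, an absolute-value replacement for the sign, and a combination of energy comparison with the Radon--Riesz property for the strong convergence. First, for existence I would record that $H_{\epsilon,p}(\overrightarrow{\gamma})\ge|\overrightarrow{\gamma}|^p/p$, so that by the Poincar\'e and Young inequalities the functional satisfies $I^{(\epsilon)}[u]\ge\frac{1}{2p}\|\nabla u\|_{L^p}^p-C_{f,p}$, giving coercivity on $W^{1,p}_0(\Omega)$ with constants independent of $\epsilon\in(0,1]$. Since $1<p<\infty$, the space $W^{1,p}_0(\Omega)$ is reflexive, so a minimizing sequence is bounded and admits a weakly convergent subsequence; because $H_{\epsilon,p}$ is convex in $\overrightarrow{\gamma}$ and the source term is weakly continuous (here $W^{1,p}_0\hookrightarrow L^p$ compactly and $f\in L^{p^*}$), the functional $I^{(\epsilon)}$ is weakly lower semicontinuous, as already noted in the text, and the weak limit is a minimizer $\bar u_\epsilon$.

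Next, to obtain the sign in $(7)$--$(8)$ I would exploit that the stored energy density depends on $u_\epsilon$ only through $|\nabla u_\epsilon|$. For ${\rm Sgn}(f)=1$ replace $\bar u_\epsilon$ by $|\bar u_\epsilon|\in W^{1,p}_0(\Omega)$: since $|\nabla|\bar u_\epsilon||=|\nabla\bar u_\epsilon|$ a.e., one has $\int_\Omega H_{\epsilon,p}(\nabla|\bar u_\epsilon|)\,dx=\int_\Omega H_{\epsilon,p}(\nabla\bar u_\epsilon)\,dx$, while $\int_\Omega f|\bar u_\epsilon|\,dx\ge\int_\Omega f\bar u_\epsilon\,dx$ because $f\ge0$. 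Hence $I^{(\epsilon)}[|\bar u_\epsilon|]\le I^{(\epsilon)}[\bar u_\epsilon]$, so $|\bar u_\epsilon|$ is again a minimizer and may be taken as the nonnegative one claimed in $(6)$--$(7)$; the case ${\rm Sgn}(f)=-1$ is symmetric via $-|\bar u_\epsilon|$.

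Finally, for the convergence $(8)$ I would first note that testing with $v=0$ gives $I^{(\epsilon)}[\bar u_\epsilon]\le\chi(p)\epsilon^p|\Omega|/p$, which together with coercivity yields a bound on $\|\nabla\bar u_\epsilon\|_{L^p}$ uniform in $\epsilon$. Extracting a subsequence $\bar u_\epsilon\rightharpoonup\tilde u$ in $W^{1,p}_0(\Omega)$, strongly in $L^p$ and a.e., we retain $\tilde u\ge0$. To identify $\tilde u$ I would compare energies: for every $v\in W^{1,p}_0(\Omega)$, dominated convergence gives $I^{(\epsilon)}[v]\to I[v]$, while weak lower semicontinuity of the $L^p$-norm together with $H_{\epsilon,p}\ge|\cdot|^p/p$ and $\int_\Omega f\bar u_\epsilon\,dx\to\int_\Omega f\tilde u\,dx$ gives $\liminf_\epsilon I^{(\epsilon)}[\bar u_\epsilon]\ge I[\tilde u]$. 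The minimality inequality $I^{(\epsilon)}[\bar u_\epsilon]\le I^{(\epsilon)}[v]$ then forces $I[\tilde u]\le I[v]$ for all $v$, so $\tilde u$ minimizes $(\mathscr{P})$; by strict convexity of $u\mapsto\frac1p\int_\Omega|\nabla u|^p\,dx$ the minimizer is unique, whence $\tilde u=\bar u$ and the whole family converges.

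The main obstacle is upgrading this weak convergence to \emph{strong} convergence. For that I would use the elementary bound $(a^2+\epsilon^2)^{p/2}\le a^p+\epsilon^p$ when $p<2$ (with equality of densities when $p\ge2$) to sandwich $\frac1p\int_\Omega|\nabla\bar u_\epsilon|^p\,dx\le\int_\Omega H_{\epsilon,p}(\nabla\bar u_\epsilon)\,dx\le\frac1p\int_\Omega|\nabla\bar u_\epsilon|^p\,dx+\epsilon^p|\Omega|/p$. Combined with the already established identity $\lim_\epsilon I^{(\epsilon)}[\bar u_\epsilon]=I[\bar u]$ and with $\int_\Omega f\bar u_\epsilon\,dx\to\int_\Omega f\bar u\,dx$, this gives $\|\nabla\bar u_\epsilon\|_{L^p}\to\|\nabla\bar u\|_{L^p}$. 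Weak convergence of $\nabla\bar u_\epsilon$ to $\nabla\bar u$ together with convergence of the norms then yields strong convergence in $L^p(\Omega)^n$ by the Radon--Riesz property, i.e. the uniform convexity of $L^p$ for $1<p<\infty$, and the Poincar\'e inequality promotes this to convergence in $W^{1,p}_0(\Omega)$.
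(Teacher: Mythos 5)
Your proposal is correct and follows essentially the same route as the paper: the direct method with coercivity from $H_{\epsilon,p}(\overrightarrow{\gamma})\ge|\overrightarrow{\gamma}|^p/p$ plus Poincar\'e for existence, the minimization property for the sign, the energy sandwich $I[\bar u]\le I[\bar u_\epsilon]\le I^{(\epsilon)}[\bar u_\epsilon]\le I^{(\epsilon)}[\bar u]$ to get convergence of the gradient norms, and Radon--Riesz (the paper's ``Riesz mean convergence theorem'') combined with weak convergence to upgrade to strong convergence in $W^{1,p}_0(\Omega)$. You merely spell out two steps the paper leaves implicit, namely the $|\bar u_\epsilon|$-truncation behind the sign claim and the liminf argument identifying the weak limit with the unique minimizer of $(\mathscr{P})$.
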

Next, we are going to explore the analytic solution of (3) rather
than finite element approximation, which is of great use in the
discussion of Mong-Kantorovich mass transfer problems \cite{Evans1},
etc. Here, let $\Omega=\mathbb{B}(O,R_2)\backslash
\mathbb{B}(O,R_1)$, $R_2>R_1>0$, where both $\mathbb{B}(O,R_1)$ and
$\mathbb{B}(O,R_2)$ denote open balls with center $O$ and radii
$R_1$ and $R_2$ in $\mathbb{R}^n$, and $\Gamma_1$ and $\Gamma_2$
denote the corresponding boundary sphere, respectively. We are
interested in the radially symmetric and continuous solutions for
(3). Correspondingly, let the feasible function space $\mathscr{N}$
in (4) be defined as \beq\mathscr{N}:=\Big\{u\in
W_0^{1,p}(\Omega)\cap C(\overline{\Omega}):\ u\ \text{\rm is
radially symmetric}\Big\}.\eneq

In the following, one is to construct the radially symmetric
analytic solution for (3) through {\it canonical duality method}
introduced by G. Strang et al. This theory was originally proposed
to find minimizers for a non-convex strain energy functional with a
double-well potential. During the past few years, considerable
effort has been taken to illustrate these non-convex problems from
the theoretical point of view. By applying this technique, the
authors characterized the local and global energy extrema for both
hard and soft devices and finally
obtained the analytical solutions \cite{G6}.\\

At the moment, we would like to introduce the main theorem.
\begin{thm}
Assume that the source term $f\in C(\overline{\Omega})$ is radially
symmetric and satisfies \beq{\rm Sgn}(f)=1\ \text{\rm or}\ {\rm
Sgn}(f)=-1\ \text{\rm on}\ \overline{\Omega}.\eneq Moreover, let
\beq E_{\epsilon,p}(r):=\displaystyle
r^{(2p-2)/(p-2)}-\chi(p)\epsilon^2r^2. \eneq It is evident that,
when $p\in(1,2)$, $E_{\epsilon,p}$ is strictly decreasing with
respect to $r\in(0,\epsilon^{p-2}]$; while when $p\in(2,\infty)$,
$E_{\epsilon,p}$ is strictly increasing with respect to
$r\in[0,\infty)$. In either case, let $E_{\epsilon,p}^{-1}$ stand
for the inverse of $E_{\epsilon,p}$. Then, there exists a sequence
of global minimizers $\{\bar{u}_\epsilon\}_\epsilon$ from
$\mathscr{N}$ in (9) for the approximation problems
($\mathscr{P}^{(\epsilon)}$), which is at the same time a sequence
of analytic solutions for the Euler-Lagrange equations (3) in the
following explicit form $\bar{u}_\epsilon(r)$(without any confusion
with respect to $\bar{u}_\epsilon(x)$, as well as $f(r)$), \beq
\bar{u}_\epsilon(r)=\displaystyle\int^{r}_{R_1}F_\epsilon(\rho)\rho/E_{\epsilon,p}^{-1}(F_\epsilon^2(\rho)\rho^2)d\rho,\
\ r\in[R_1,R_2], \eneq where $F_\epsilon$ is defined as

\beq F_\epsilon(r):=\displaystyle
R_1^nC_\epsilon/r^n+\int_{r}^{R_1}f(\rho)\rho^{n-1}/r^n d\rho,\ \
r\in[R_1,R_2],\eneq and $\{C_\epsilon\}_\epsilon$ is a positive
number sequence given by (33).
\end{thm}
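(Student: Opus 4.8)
The plan is to exploit radial symmetry to collapse $(\mathscr{P}^{(\epsilon)})$ to a one-dimensional variational problem, then to run the canonical duality machinery on the reduced functional to produce an explicit stationary field, and finally to certify that this field is the global minimizer. First I would pass to polar coordinates: for radially symmetric $u\in\mathscr{N}$ one has $|\nabla u|=|u'(r)|$, and writing $\omega_n$ for the surface area of the unit sphere, the functional becomes
\beq
I^{(\epsilon)}[u]=\omega_n\int_{R_1}^{R_2}\Big(\tfrac1p\big((u')^2+\chi(p)\epsilon^2\big)^{p/2}-f(r)\,u\Big)r^{n-1}\,dr .
\eneq
The point is that the integrand is $H_{\epsilon,p}(u')=W(\xi)$ with $W(\xi):=\xi^{p/2}/p$ a \emph{canonical} function of the geometric measure $\xi:=(u')^2+\chi(p)\epsilon^2$ --- exactly the structure the canonical duality method of Strang et al.\ requires.

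Next I would introduce the canonical dual variable $\varsigma:=W'(\xi)=\tfrac12\xi^{p/2-1}$, form the complementary energy by Legendre transform, and build the total complementary functional, whose stationarity conditions split into a \emph{constitutive relation} and an \emph{equilibrium equation}. The constitutive relation singles out the stress $\sigma:=\big((u')^2+\chi(p)\epsilon^2\big)^{(p-2)/2}u'=\partial H_{\epsilon,p}/\partial u'$, while the equilibrium equation is precisely the radial form of (3), namely $\big(r^{n-1}\sigma(r)\big)'=-f(r)r^{n-1}$. Integrating the latter from $R_1$ and absorbing the inner flux into a constant $C_\epsilon$ gives $r^{n-1}\sigma(r)=R_1^nC_\epsilon-\int_{R_1}^{r}f(\rho)\rho^{n-1}\,d\rho$, that is $\sigma(r)=r\,F_\epsilon(r)$ with $F_\epsilon$ as in (14).

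The decisive algebraic step is to invert the constitutive relation. Setting $s:=\big((u')^2+\chi(p)\epsilon^2\big)^{(p-2)/2}$, so that $(u')^2=s^{2/(p-2)}-\chi(p)\epsilon^2$ and $\sigma=s\,u'$, I square to obtain, after combining the exponents into $(2p-2)/(p-2)$, the identity
\beq
\sigma^2=s^2(u')^2=s^{(2p-2)/(p-2)}-\chi(p)\epsilon^2 s^2=E_{\epsilon,p}(s).
\eneq
Hence $s=E_{\epsilon,p}^{-1}(\sigma^2)=E_{\epsilon,p}^{-1}\big(F_\epsilon^2(r)r^2\big)$, which is well defined by the monotonicity of $E_{\epsilon,p}$ recorded in the statement; in the singular case $p\in(1,2)$ the admissible range of $s$ is exactly $(0,\epsilon^{p-2}]$, corresponding to $u'\in\mathbb{R}$. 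Dividing then yields $u'(r)=\sigma/s=F_\epsilon(r)r/E_{\epsilon,p}^{-1}\big(F_\epsilon^2(r)r^2\big)$, and integrating from $R_1$ with the built-in condition $u(R_1)=0$ produces formula (13).

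It remains to fix $C_\epsilon$ and to certify global optimality. The free constant is pinned down by the outer boundary condition $u(R_2)=0$, i.e.\ by requiring $\int_{R_1}^{R_2}F_\epsilon(\rho)\rho/E_{\epsilon,p}^{-1}\big(F_\epsilon^2(\rho)\rho^2\big)\,d\rho=0$, which is equation (33); here the sign hypothesis ${\rm Sgn}(f)=\pm1$ guarantees that $F_\epsilon$ depends monotonically on $C_\epsilon$, so a unique positive $C_\epsilon$ exists by continuity and an intermediate-value argument. Finally, since $W$ is canonical and the dual field $\varsigma$ (equivalently the transport density $s>0$) lies in the correct half-space, the complementary-dual (triality) principle of the canonical duality theory certifies that the constructed stationary field is the \emph{global} minimizer of $(\mathscr{P}^{(\epsilon)})$, with the dual problem attaining its global maximum at $\varsigma$; analyticity of $\bar u_\epsilon$ is then immediate from the explicit composition once $f\in C(\overline{\Omega})$. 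The main obstacle I anticipate is this last packaging together with the solvability of (33): one must control the sign and the admissible range of $s=E_{\epsilon,p}^{-1}\big(F_\epsilon^2 r^2\big)$ uniformly --- so that $E_{\epsilon,p}^{-1}$ is always evaluated inside its monotonic branch $(0,\epsilon^{p-2}]$ when $p\in(1,2)$ --- and then invoke the gap-free duality to upgrade stationarity to genuine global minimality.
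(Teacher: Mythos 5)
Your proposal is correct and follows essentially the same route as the paper: canonical dual transformation, the equilibrium equation $({\rm div}\,\overrightarrow{\theta_\epsilon}+f=0$, radially $(r^{n-1}\sigma)'=-fr^{n-1})$ yielding $F_\epsilon$ and the constant $C_\epsilon$, the dual algebraic equation $\sigma^2=E_{\epsilon,p}(s)$ inverted to recover $u'$, the outer boundary condition $u(R_2)=0$ fixing $C_\epsilon$ by a monotonicity/intermediate-value argument, and the duality (second-variation) principle certifying global minimality. The only cosmetic difference is that you reduce to one dimension at the outset, whereas the paper sets up the duality in $\mathbb{R}^n$ and specializes to radial symmetry afterward.
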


\begin{rem}
For $p>2$, the above theorem gives the exact radially symmetric
solution of the $p$-Laplacian (1). While for $p<2$, using the
techniques in the proof of Theorem 1.1, we can choose from
$\{\bar{u}_\epsilon\}_\epsilon$ a subsequence of radially symmetric
solutions of (3) which converge to the exact radially symmetric
solution of (1) in $W^{1,p}_0(\Omega)$ as $\epsilon\to0$.
\end{rem}
\begin{rem}
From the structure of the number sequence $\{C_\epsilon\}_\epsilon$
in the proof of Theorem 1.2, one finds out, if ${\rm Sgn}(f)=1$,
then $\bar{u}_\epsilon\geq0$ on $\overline{\Omega}$, while if ${\rm
Sgn}(f)=-1$, then $\bar{u}_\epsilon\leq0$ on $\overline{\Omega}$,
which is in accordance with Theorem 1.1. In addition, the special
form of $\{C_\epsilon\}_\epsilon$ does not allow $R_1=0$.
\end{rem}
\begin{rem}
When $p=\infty$, it turns out $E_{\epsilon,\infty}=r^2$, which is
invertible on $[0,\infty)$. Then, one has $|du_\epsilon/dr|=1$, and
an exact solution is in the zigzag form. The uniqueness of the
solution fails in this limit case \cite{Evans1}. On the other hand,
for the case $p=1$, (8) in Theorem 1.1 fails. These two important
cases remain to be discussed \cite{Evans1,BK}.

\end{rem}
The rest of the paper is organized as follows. In Section 2, first,
we introduce some useful notations which will simplify our proofs
considerably. Then, we prove Theorem 1.1 by applying
Rellich-Kondrachov compactness theorem and Riesz's mean convergence
theorem. Next, we apply the canonical dual transformation to deduce
a sequence of perfect dual problems ($\mathscr{P}^{(\epsilon)}_d$)
corresponding to $(\mathscr{P}^{(\epsilon)})$ and a pure
complementary energy principle in order to prove Theorem 1.2.
\section{Proof of the main results}
\subsection{Notations}
Before proceeding, first and foremost, we introduce some useful
notations.\\

\begin{itemize}
\item $\overrightarrow{\theta_\epsilon}$ is the corresponding G\^{a}teaux derivative of
$H$ with respect to $\nabla u_\epsilon$ given by
\[\overrightarrow{\theta_\epsilon}(x)=(\theta^{(1)}_{\epsilon}(x),\cdots,\theta^{(n)}_{\epsilon}(x)):=(|\nabla
u_\epsilon|^2+\epsilon^2\chi(p))^{(p-2)/2}\nabla u_\epsilon.\]
\item $\Phi_\epsilon:\mathscr{N}\to L^{{p}/{2}}(\Omega)$ is a nonlinear
geometric mapping given by \[\Phi_\epsilon(u_\epsilon):=|\nabla
u_\epsilon|^{2}+\epsilon^2\chi(p).\] For convenience's sake, denote
$\xi_\epsilon:=\Phi_\epsilon(u_\epsilon).$
\item $\Psi$ is the canonical energy
defined as
\[\Psi(\xi_\epsilon):={\xi_\epsilon}^{{p}/{2}}/p.\]
\item $\zeta_\epsilon$ is the corresponding G\^{a}teaux derivative of
$\Psi$ with respect to $\xi_\epsilon$ given by
\[\zeta_\epsilon=\xi_\epsilon^{{(p-2)}/{2}}/{2},\] which is
invertible with respect to $\xi_\epsilon$ and belongs to the
following function space $\mathscr{V}_\epsilon$,
$$\mathscr{V}_\epsilon:=\left\{\begin{array}{cl}
\Big\{\phi: 0<\phi\leq\epsilon^{p-2}/2\Big\},&
p\in(1,2);\\
\\
\Big\{\phi: \phi\in L^{p/(p-2)}(\Omega)\Big\},&
p\in(2,\infty).\end{array}\right.$$
\item $\Psi_\ast$ is Legendre transformation defined
as
\[\Psi_\ast(\zeta_\epsilon):=\xi_\epsilon\zeta_\epsilon-\Psi(\xi_\epsilon)=(p-2)(2\zeta_\epsilon)^{p/(p-2)}/(2p).\]
\end{itemize}
\subsection{Proof of Theorem 1.1}
Without loss of generality, we prove the case for Sgn$(f)=1$. Let
$\bar{u}_\epsilon$ be a solution for (4), then, by the minimization
property, $\bar{u}_\epsilon\geq0$ a.e. in $\Omega$. Next, we prove
the existence and convergence of a sequence of solutions. By
applying H\"{o}lder's inequality, one has
\[\displaystyle\int_\Omega |fu_\epsilon|
dx\leq\Big(\int_\Omega|f|^{p^*}dx\Big)^{1/p^*}\Big(\int_\Omega|u_\epsilon|^{p}dx\Big)^{1/p}.\]
Since $\Omega$ is bounded, for any $u_\epsilon\geq0$ a.e. from
$W^{1,p}_0(\Omega)$, by applying Poincar\'{e}'s inequality, one
obtains
\[\begin{array}{lll}\epsilon^p/p\ {\rm meas}(\Omega)&\geq&\displaystyle\int_\Omega H_\epsilon(\nabla
u_\epsilon)dx-\int_\Omega fu_\epsilon dx\\
\\
&\geq&\displaystyle 1/p\int_\Omega (|\nabla
u_\epsilon|^2+\epsilon^2)^{p/2}dx-\Big(\int_\Omega|f|^{p^*}dx\Big)^{1/p^*}\Big(\int_\Omega|u_\epsilon|^{p}dx\Big)^{1/p}\\
\\
&\geq&\displaystyle 1/p\int_\Omega (|\nabla
u_\epsilon|^2+\epsilon^2)^{p/2}dx-C(p)\Big(\int_\Omega|f|^{p^*}dx\Big)^{1/p^*}\Big(\int_\Omega|\nabla
u_\epsilon|^{p}dx\Big)^{1/p}.
\\
\end{array}\]
Using the contradiction method, it is easy to check that,
$u_\epsilon$ is uniformly bounded in $W^{1,p}_0(\Omega)$ for $p>1$,
and consequently, $I^{(\epsilon)}$ is bounded from below.\\

We see that, there exists at least one solution $\bar{u}_\epsilon\in
W^{1,p}_0(\Omega)$ to (4) due to the fact that
$L_{\epsilon}(\overrightarrow{\gamma},z,x)$ satisfies the coercivity
inequality (5) and is convex in the variable
$\overrightarrow{\gamma}$ \cite{Evans}. On the one hand, according
to the lower semicontinuity of $I^{(\epsilon)}$ and the unique
solvability of (2), it holds \beq \bar{u}_\epsilon\rightharpoonup
\bar{u}\ \text{weakly\ in}\ W^{1,p}_0(\Omega)\ \text{as}\
\epsilon\to0, \eneq where $\bar{u}$ is the unique solution to (2).
Moreover, since the imbedding $W^{1,p}(\Omega)\hookrightarrow
L^1(\Omega)$ is compact, then, \beq \bar{u}_\epsilon\rightarrow
\bar{u}\ \text{in}\ L^{1}(\Omega)\ \text{as}\ \epsilon\to0.\eneq On
the other hand, from the minimization property $I[\bar{u}]\leq
I[\bar{u}_\epsilon]\leq I^{(\epsilon)}[\bar{u}_\epsilon]\leq
I^{(\epsilon)}[\bar{u}]$, we have \beq I[\bar{u}_\epsilon]\to
I[\bar{u}]\ \text{as}\ \epsilon\to0. \eneq The above convergence
properties (15) and (16) indicate that \beq
\displaystyle\int_\Omega|\nabla \bar{u}_\epsilon|^pdx\to
\int_\Omega|\nabla \bar{u}|^pdx\ \text{as}\ \epsilon\to0.\eneq
Finally, combining (14) and (17), we reach the conclusion.
\subsection{Proof of Theorem 1.2}
\begin{defi}
By Legendre transformation, one defines a total complementary energy
functional $\Xi$,
\[
\Xi(u_\epsilon,\zeta_\epsilon):=\displaystyle\int_{\Omega}\Big\{\Phi_{\epsilon}(u_\epsilon)\zeta_\epsilon-\Psi_\ast(\zeta_\epsilon)
-fu_\epsilon\Big\}dx.
\]
\end{defi}
Next, we introduce an important {\it criticality criterium} for the
total complementary energy functional.
\begin{defi}
$(\bar{u}_\epsilon, \bar{\zeta}_\epsilon)$ is called a critical pair
of $\Xi$ if and only if \beq
D_{u_\epsilon}\Xi(\bar{u}_\epsilon,\bar{\zeta}_\epsilon)=0, \eneq
\beq D_{\zeta_\epsilon}\Xi(\bar{u}_\epsilon,\bar{\zeta}_\epsilon)=0,
\eneq where $D_{u_\epsilon}, D_{\zeta_\epsilon}$ denote the partial
G\^ateaux derivatives of $\Xi$, respectively. \end{defi} Indeed, by
variational calculus, one has the following observations from (18)
and (19).
\begin{lem}
On the one hand, for any fixed
$\zeta_\epsilon\in\mathscr{V}_\epsilon$, $(18)$ is equivalent to the
equilibrium equation
\[
\begin{array}{ll}\displaystyle {\rm div}(2\zeta_\epsilon \nabla\bar{u}_\epsilon)+f=0,& \
\text{\rm in}\ \Omega.\end{array}
\]
On the other hand, for any fixed $u_\epsilon$ from $\mathscr{N}$,
(19) is consistent with the constructive law
\[
\Phi_\epsilon(u_\epsilon)=D_{\zeta_\epsilon}\Psi_\ast(\bar{\zeta}_\epsilon).
\]
\end{lem}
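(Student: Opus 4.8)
The plan is to prove both assertions by directly computing the two partial Gâteaux derivatives of the total complementary energy functional $\Xi$ and then invoking the fundamental lemma of the calculus of variations; the two claims are independent, so I would handle them in turn. Throughout, I would exploit the fact that in the integrand $\Phi_\epsilon(u_\epsilon)\zeta_\epsilon-\Psi_\ast(\zeta_\epsilon)-fu_\epsilon$ the dependence on $u_\epsilon$ and on $\zeta_\epsilon$ is cleanly separated, so each derivative picks up only two of the three terms.

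For the first assertion I fix $\zeta_\epsilon\in\mathscr{V}_\epsilon$ and differentiate along an admissible variation $v$ vanishing on $\Gamma$. Using $\Phi_\epsilon(u_\epsilon)=|\nabla u_\epsilon|^2+\epsilon^2\chi(p)$ and $\frac{d}{dt}\big|_{t=0}|\nabla(u_\epsilon+tv)|^2=2\nabla u_\epsilon\cdot\nabla v$, I obtain
\[
\langle D_{u_\epsilon}\Xi(\bar u_\epsilon,\zeta_\epsilon),v\rangle=\int_\Omega (2\zeta_\epsilon\nabla\bar u_\epsilon\cdot\nabla v-fv)\,dx.
\]
Integrating the first term by parts and discarding the boundary contribution, which vanishes because $v=0$ on $\Gamma$, recasts this as $\int_\Omega (-\mathrm{div}(2\zeta_\epsilon\nabla\bar u_\epsilon)-f)v\,dx$. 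Condition (18) forces this to vanish for every such $v$, so the fundamental lemma yields $\mathrm{div}(2\zeta_\epsilon\nabla\bar u_\epsilon)+f=0$ in $\Omega$, exactly the equilibrium equation; the converse direction is immediate by reversing the integration by parts, which gives the claimed equivalence.

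For the second assertion I fix $u_\epsilon\in\mathscr{N}$ and differentiate with respect to $\zeta_\epsilon$ along a variation $\eta$. Only $\Phi_\epsilon(u_\epsilon)\zeta_\epsilon$ and $\Psi_\ast(\zeta_\epsilon)$ contribute, giving
\[
\langle D_{\zeta_\epsilon}\Xi(u_\epsilon,\bar\zeta_\epsilon),\eta\rangle=\int_\Omega (\Phi_\epsilon(u_\epsilon)-D_{\zeta_\epsilon}\Psi_\ast(\bar\zeta_\epsilon))\eta\,dx.
\]
Requiring (19) to hold for every admissible $\eta$ and applying the fundamental lemma once more produces the pointwise constitutive law $\Phi_\epsilon(u_\epsilon)=D_{\zeta_\epsilon}\Psi_\ast(\bar\zeta_\epsilon)$ a.e. in $\Omega$. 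Differentiating the explicit expression $\Psi_\ast(\zeta_\epsilon)=(p-2)(2\zeta_\epsilon)^{p/(p-2)}/(2p)$ gives $D_{\zeta_\epsilon}\Psi_\ast(\zeta_\epsilon)=(2\zeta_\epsilon)^{2/(p-2)}$, so substituting the canonical relation $\bar\zeta_\epsilon=\xi_\epsilon^{(p-2)/2}/2$ returns $\xi_\epsilon=\Phi_\epsilon(u_\epsilon)$; this confirms the law is precisely the inverse of the constitutive relation recorded in the notation, establishing the stated consistency.

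I expect the only genuine subtleties to be bookkeeping rather than conceptual. First, differentiation under the integral sign must be justified, which follows from the smoothness of the integrand in each slot together with the integrability and coercivity built into $\mathscr{N}$ and $\mathscr{V}_\epsilon$. Second, the integration by parts in the first assertion requires $2\zeta_\epsilon\nabla\bar u_\epsilon$ to be regular enough for its divergence to be interpreted, if necessary in the weak sense, so the equilibrium equation is understood distributionally, consistently with $\zeta_\epsilon\in\mathscr{V}_\epsilon$. Finally, for $p\in(1,2)$ the set $\mathscr{V}_\epsilon$ carries the one-sided constraint $0<\phi\le\epsilon^{p-2}/2$, so the Euler--Lagrange identity for the $\zeta_\epsilon$-variation is valid at interior points; since $\bar\zeta_\epsilon=\xi_\epsilon^{(p-2)/2}/2$ with $\xi_\epsilon\ge\epsilon^2$, the bound is attained only where $\nabla\bar u_\epsilon=0$, hence the constraint is inactive wherever $\nabla\bar u_\epsilon\ne0$ and the stated consistency is unaffected.
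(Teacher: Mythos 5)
Your proposal is correct and follows exactly the route the paper intends: the paper gives no explicit proof of this lemma, asserting only that it follows ``by variational calculus,'' and your computation of the two partial G\^ateaux derivatives, the integration by parts, and the verification that $D_{\zeta_\epsilon}\Psi_\ast(\zeta_\epsilon)=(2\zeta_\epsilon)^{2/(p-2)}$ inverts the canonical relation $\zeta_\epsilon=\xi_\epsilon^{(p-2)/2}/2$ is precisely the omitted argument. Your closing remarks on the one-sided constraint in $\mathscr{V}_\epsilon$ for $p\in(1,2)$ are a welcome extra precaution not addressed in the paper.
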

Lemma 2.3 indicates that $\bar{u}_\epsilon$ from the critical pair
$(\bar{u}_\epsilon,\bar{\zeta}_\epsilon)$ solves the Euler-Lagrange
equation (3).
\begin{defi}
From Definition 2.2, one defines the pure complementary energy
$I^{(\epsilon)}_d$ in the following form
\[
I^{(\epsilon)}_d[\zeta_\epsilon]:=\Xi(\bar{u}_\epsilon,\zeta_\epsilon),
\]
where $\bar{u}_\epsilon$ solves the Euler-Lagrange equation (3).
\end{defi}
To further the discussion, one uses another representation of the
pure energy $I^{(\epsilon)}_d$ given by the following lemma.
\begin{lem} The
pure complementary energy functional $I^{(\epsilon)}_d$ can be
rewritten as
\[
I^{(\epsilon)}_d[\zeta_\epsilon]=-1/2\int_{\Omega}\Big\{{|\overrightarrow{\theta_\epsilon}|^2/(2\zeta_\epsilon)}-2\chi(p)\epsilon^2\zeta_\epsilon+(p-2)(2\zeta_\epsilon)^{p/(p-2)}/p\Big\}dx,
\]
where $\overrightarrow{\theta_\epsilon}$ satisfies \beq {\rm
div}\overrightarrow{\theta_{\epsilon}}+f=0\ \ \text{\rm in}\ \Omega,
\eneq equipped with a hidden boundary condition.
\end{lem}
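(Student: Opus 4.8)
The plan is to derive the claimed representation of the pure complementary energy functional $I^{(\epsilon)}_d$ by starting from its definition in Definition 2.4, namely $I^{(\epsilon)}_d[\zeta_\epsilon]=\Xi(\bar{u}_\epsilon,\zeta_\epsilon)$, and then eliminating the primal variable $\bar{u}_\epsilon$ in favor of the dual quantity $\overrightarrow{\theta_\epsilon}$ using the equilibrium relations from Lemma 2.3. Writing out $\Xi(\bar{u}_\epsilon,\zeta_\epsilon)=\int_\Omega\{\Phi_\epsilon(\bar{u}_\epsilon)\zeta_\epsilon-\Psi_\ast(\zeta_\epsilon)-f\bar{u}_\epsilon\}dx$ and substituting $\Phi_\epsilon(\bar{u}_\epsilon)=|\nabla\bar{u}_\epsilon|^2+\chi(p)\epsilon^2$ together with the explicit Legendre transform $\Psi_\ast(\zeta_\epsilon)=(p-2)(2\zeta_\epsilon)^{p/(p-2)}/(2p)$, I would expand the integrand into three groups of terms: the gradient term $|\nabla\bar{u}_\epsilon|^2\zeta_\epsilon$, the perturbation term $\chi(p)\epsilon^2\zeta_\epsilon$, and the source-coupling term $f\bar{u}_\epsilon$.

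The key step is to rewrite the coupled term $\int_\Omega f\bar{u}_\epsilon\,dx$ using integration by parts together with the equilibrium equation. From Lemma 2.3 the pair satisfies ${\rm div}(2\zeta_\epsilon\nabla\bar{u}_\epsilon)+f=0$, so that setting $\overrightarrow{\theta_\epsilon}=2\zeta_\epsilon\nabla\bar{u}_\epsilon$ (which is consistent with the notation $\overrightarrow{\theta_\epsilon}=(|\nabla u_\epsilon|^2+\epsilon^2\chi(p))^{(p-2)/2}\nabla u_\epsilon$ since $\zeta_\epsilon=\xi_\epsilon^{(p-2)/2}/2$) yields precisely the divergence equation (20), ${\rm div}\,\overrightarrow{\theta_\epsilon}+f=0$. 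Integrating $\int_\Omega f\bar{u}_\epsilon\,dx=-\int_\Omega({\rm div}\,\overrightarrow{\theta_\epsilon})\bar{u}_\epsilon\,dx$ by parts gives $\int_\Omega\overrightarrow{\theta_\epsilon}\cdot\nabla\bar{u}_\epsilon\,dx$ up to a boundary contribution, and since $\overrightarrow{\theta_\epsilon}\cdot\nabla\bar{u}_\epsilon=2\zeta_\epsilon|\nabla\bar{u}_\epsilon|^2=|\overrightarrow{\theta_\epsilon}|^2/(2\zeta_\epsilon)$, I can simultaneously express the gradient term and convert the coupling term into an expression depending only on $\overrightarrow{\theta_\epsilon}$ and $\zeta_\epsilon$. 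Collecting coefficients then produces the combination $-|\overrightarrow{\theta_\epsilon}|^2/(4\zeta_\epsilon)+\chi(p)\epsilon^2\zeta_\epsilon-(p-2)(2\zeta_\epsilon)^{p/(p-2)}/(2p)$, and factoring out the global $-1/2$ recovers exactly the stated formula.

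The main obstacle, and the reason the lemma flags a \emph{hidden boundary condition}, is the boundary term $\int_\Gamma(\overrightarrow{\theta_\epsilon}\cdot\mathbf{n})\bar{u}_\epsilon\,dS$ generated by the integration by parts. On the outer portion of $\Gamma$ where the Dirichlet condition $\bar{u}_\epsilon=0$ holds this term vanishes automatically, but on the inner sphere $\Gamma_1$ (in the annular geometry $\Omega=\mathbb{B}(O,R_2)\setminus\mathbb{B}(O,R_1)$ introduced for Theorem 1.2) the value of $\bar{u}_\epsilon$ is not prescribed to be zero, so one must argue that the normal flux $\overrightarrow{\theta_\epsilon}\cdot\mathbf{n}$ together with the admissibility of $\zeta_\epsilon$ in $\mathscr{V}_\epsilon$ forces this contribution to drop out — this is the implicit constraint encoded in the phrase ``equipped with a hidden boundary condition.'' I would therefore carefully track the boundary integral rather than discard it silently, and record that its vanishing is exactly the compatibility condition that the dual field $\zeta_\epsilon$ must satisfy for the representation to be valid; the remaining algebraic simplification of the bulk integrand is routine once the substitution $\overrightarrow{\theta_\epsilon}=2\zeta_\epsilon\nabla\bar{u}_\epsilon$ is in place.
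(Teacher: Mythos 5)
Your proposal is correct and is essentially the paper's own argument: expand $\Xi(\bar u_\epsilon,\zeta_\epsilon)$, integrate the source term by parts against ${\rm div}\,\overrightarrow{\theta_\epsilon}+f=0$ with $\overrightarrow{\theta_\epsilon}=2\zeta_\epsilon\nabla\bar u_\epsilon$, and substitute $\zeta_\epsilon|\nabla\bar u_\epsilon|^2=|\overrightarrow{\theta_\epsilon}|^2/(4\zeta_\epsilon)$ --- the paper merely packages the same algebra as a split into a part $(I)$ annihilated by the Euler--Lagrange equation and a remainder $(II)$. The only quibble is your boundary discussion: in the paper's annular setting the Dirichlet condition $u=0$ is imposed on the whole boundary $\Gamma=\Gamma_1\cup\Gamma_2$ (indeed the construction enforces $\bar u_\epsilon(R_1)=\bar u_\epsilon(R_2)=0$), so the boundary term you track vanishes outright and no separate flux condition on the inner sphere is needed.
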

\begin{proof}
Through integrating by parts, one has
\[
\begin{array}{lll}
I^{(\epsilon)}_d[\zeta_\epsilon]&=&\displaystyle-\underbrace{\int_\Omega\Big\{{\rm
div}(2\zeta_\epsilon \nabla\bar{u}_{\epsilon})+f\Big\}\bar{u}_\epsilon dx}_{(I)}\\
\\
&&-\underbrace{\int_\Omega\Big\{\zeta_\epsilon|\nabla\bar{u}_{\epsilon}|^2-\chi(p)\epsilon^2\zeta_\epsilon+(p-2)(2\zeta_\epsilon)^{p/(p-2)}/(2p)\Big\}dx.}_{(II)}\\
\\
\end{array}
\]
Since $\bar{u}_\epsilon$ solves the Euler-Lagrange equation (3),
then the first part $(I)$ disappears. Keeping in mind the
definitions of $\overrightarrow{\theta}_\epsilon$ and
$\zeta_\epsilon$, one reaches the conclusion.
\end{proof}

With the above discussion, in the following, we establish a sequence
of dual variational problems ($\mathscr{P}_d^{(\epsilon)}$)
corresponding to the approximation problems
($\mathscr{P}^{(\epsilon)}$), namely,
\begin{itemize}
\item $p\in(1,2)$
\beq
\displaystyle\min_{\zeta_\epsilon\in\mathscr{V}_{\epsilon}}\Big\{I^{(\epsilon)}_d[\zeta_\epsilon]=-1/2\int_{\Omega}\{{|\overrightarrow{\theta_\epsilon}|^2/(2\zeta_\epsilon)}-2\chi(p)\epsilon^2\zeta_\epsilon+(p-2)(2\zeta_\epsilon)^{p/(p-2)}/p\}dx\Big\};
\eneq
\item $p\in(2,\infty)$
\beq
\displaystyle\max_{\zeta_\epsilon\in\mathscr{V}_{\epsilon}}\Big\{I^{(\epsilon)}_d[\zeta_\epsilon]=-1/2\int_{\Omega}\{{|\overrightarrow{\theta_\epsilon}|^2/(2\zeta_\epsilon)}-2\chi(p)\epsilon^2\zeta_\epsilon+(p-2)(2\zeta_\epsilon)^{p/(p-2)}/p\}dx\Big\}.
\eneq
\end{itemize}
Indeed, by calculating the G\^{a}teaux derivative of
$I_d^{(\epsilon)}$ with respect to $\zeta_\epsilon$, one has
\begin{lem} The variation of $I_d^{(\epsilon)}$ with respect to $\zeta_\epsilon$
leads to the Dual Algebraic Equation(DAE), namely, \beq
|\overrightarrow{\theta_\epsilon}|^2=(2\bar{\zeta}_\epsilon)^{(2p-2)/(p-2)}-\chi(p)\epsilon^2(2\bar{\zeta}_\epsilon)^2,
\eneq where $\bar{\zeta}_\epsilon$ is from the critical pair
$(\bar{u}_\epsilon,\bar{\zeta}_\epsilon)$.
\end{lem}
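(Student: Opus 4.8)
The plan is to obtain the DAE directly as the stationarity (Euler--Lagrange) condition for the dual functional $I_d^{(\epsilon)}$ written out in Lemma 2.5, by computing its G\^ateaux derivative in $\zeta_\epsilon$ and setting it to zero. The conceptual crux is the observation that, in this dual formulation, $\overrightarrow{\theta_\epsilon}$ is \emph{not} an independent function of $\zeta_\epsilon$: by Lemma 2.5 it is determined by the equilibrium constraint (20), namely ${\rm div}\,\overrightarrow{\theta_\epsilon}+f=0$, together with the hidden boundary condition, both of which involve only $f$ and are independent of $\zeta_\epsilon$. Consequently, when differentiating $I_d^{(\epsilon)}$ with respect to $\zeta_\epsilon$, the field $\overrightarrow{\theta_\epsilon}$ is held frozen and $|\overrightarrow{\theta_\epsilon}|^2$ is treated as a parameter, so that only the explicit $\zeta_\epsilon$-dependence of the three terms in the integrand contributes. (This freezing is exactly what distinguishes canonical duality from a naive substitution, and getting it right is essential: treating $\overrightarrow{\theta_\epsilon}=2\zeta_\epsilon\nabla\bar u_\epsilon$ as varying would flip a sign and destroy the computation.)

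First I would fix an arbitrary admissible variation $\eta\in\mathscr{V}_\epsilon$ and form $\frac{d}{ds}\big|_{s=0}I_d^{(\epsilon)}[\bar\zeta_\epsilon+s\eta]$, differentiating the integrand term by term. The first term $|\overrightarrow{\theta_\epsilon}|^2/(2\zeta_\epsilon)$ contributes $-|\overrightarrow{\theta_\epsilon}|^2/(2\bar\zeta_\epsilon^2)$; the second term $-2\chi(p)\epsilon^2\zeta_\epsilon$ contributes $-2\chi(p)\epsilon^2$; and for the third term $(p-2)(2\zeta_\epsilon)^{p/(p-2)}/p$ the chain rule gives $2(2\bar\zeta_\epsilon)^{2/(p-2)}$, where the prefactor $(p-2)/p$ cancels the $p/(p-2)$ produced by the power rule, leaving the factor $2$ from the inner derivative of $2\zeta_\epsilon$. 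Since $\eta$ is arbitrary and the overall constant $-1/2$ is harmless, the vanishing of the derivative yields the pointwise stationarity relation
\[
-\frac{|\overrightarrow{\theta_\epsilon}|^2}{2\bar\zeta_\epsilon^2}-2\chi(p)\epsilon^2+2(2\bar\zeta_\epsilon)^{2/(p-2)}=0.
\]

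The final step is purely algebraic. Multiplying through by $2\bar\zeta_\epsilon^2=\tfrac12(2\bar\zeta_\epsilon)^2$ and isolating $|\overrightarrow{\theta_\epsilon}|^2$ converts the power term into $(2\bar\zeta_\epsilon)^2(2\bar\zeta_\epsilon)^{2/(p-2)}$ and the $\chi$-term into $\chi(p)\epsilon^2(2\bar\zeta_\epsilon)^2$. The only point demanding care is the exponent bookkeeping, namely the identity $2+\frac{2}{p-2}=\frac{2p-2}{p-2}$, which collapses the product into $(2\bar\zeta_\epsilon)^{(2p-2)/(p-2)}$ and reproduces (23) exactly. I do not expect any genuine obstacle here; beyond the exponent identity, the one place to stay alert is the propagation of signs and factors of two coming from the overall $-1/2$ and from the inner derivative of $(2\zeta_\epsilon)$, since a slip there would corrupt the coefficient in front of $\chi(p)\epsilon^2$. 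Keeping $\overrightarrow{\theta_\epsilon}$ frozen throughout, so that no adjoint or Lagrange-multiplier contribution appears, is precisely what reduces the stationarity condition to this clean algebraic DAE rather than to a differential relation in $\zeta_\epsilon$.
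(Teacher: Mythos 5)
Your proposal is correct and follows exactly the route the paper intends: the paper states Lemma 2.6 with no written proof beyond the remark that one computes the G\^ateaux derivative of $I_d^{(\epsilon)}$ in $\zeta_\epsilon$, and your term-by-term differentiation (with $\overrightarrow{\theta_\epsilon}$ frozen, since it is constrained only by ${\rm div}\,\overrightarrow{\theta_\epsilon}+f=0$) together with the exponent identity $2+\tfrac{2}{p-2}=\tfrac{2p-2}{p-2}$ reproduces (23) exactly. No gaps.
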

Let $\lambda_\epsilon:=2\bar{\zeta}_\epsilon$,  the identity (23)
can be rewritten as \beq
|\overrightarrow{\theta_\epsilon}|^2=E_{\epsilon,p}(\lambda_\epsilon)=\lambda_\epsilon^{(2p-2)/(p-2)}-\chi(p)\epsilon^2\lambda_\epsilon^2.
\eneq From the above discussion, one deduces that, once
$\overrightarrow{\theta_\epsilon}$ is given, then the analytic
solution of the Euler-Lagrange equation (3) can be represented as
\beq
\bar{u}_k(x)=\displaystyle\int^{x}_{x_0}\overrightarrow{\eta_\epsilon}\overrightarrow{dt},
\eneq where $x\in \overline{\Omega}, x_0\in\Gamma$,
$\overrightarrow{\eta_\epsilon}=(\eta^{(1)}_\epsilon,\eta^{(2)}_\epsilon,\cdots,\eta^{(n)}_\epsilon):=\overrightarrow{\theta_\epsilon}/\lambda_\epsilon$,
which satisfies the condition for path independent integrals,
namely,
\[
\partial_{x_i}\eta_\epsilon^{(j)}-\partial_{x_j}\eta_{\epsilon}^{(i)}=0,\
\ i,j=1,\cdots,n.
\]
At the moment, we verify that $\bar{u}_\epsilon$ is exactly a global
minimizer over $\mathscr{N}$ for ($\mathscr{P}^{(\epsilon)}$) and
$\bar{\zeta}_\epsilon$ is a global extremum over
$\mathscr{V}_\epsilon$ for ($\mathscr{P}^{(\epsilon)}_d$).
\begin{lem}(Canonical duality theory)
Assume that $f\in C(\overline{\Omega})$ is radially symmetric and
satisfies ${\rm Sgn}(f)=1$ or ${\rm Sgn}(f)=-1$, then, there exists
a unique sequence of radially symmetric solutions
$\{\bar{u}_\epsilon\}_\epsilon$ for the Euler-Lagrange equations (3)
with Dirichlet boundary in the form of (25), which is at the same
time a sequence of global minimizers over $\mathscr{N}$ for the
approximation problems ($\mathscr{P}^{(\epsilon)}$). And the
corresponding $\{\bar{\zeta}_\epsilon\}_\epsilon$ is a sequence of
global minimizers for the dual problems (21), or a sequence of
global maximizers for the dual problems (22). Moreover, the
following duality identities hold, \beq
I^{(\epsilon)}[\bar{u}_\epsilon]=\displaystyle\min_{u_\epsilon\in\mathscr{N}}I^{(\epsilon)}[u_\epsilon]=\Xi(\bar{u}_\epsilon,\bar{\zeta}_\epsilon)=\displaystyle\min_{\zeta_\epsilon\in\mathscr{V}_{\epsilon}}I_d^{(\epsilon)}[\zeta_\epsilon]=I_d^{(\epsilon)}[\bar{\zeta}_\epsilon]\
\text{for}\ p\in(1,2); \eneq
\item
\beq
I^{(\epsilon)}[\bar{u}_\epsilon]=\displaystyle\min_{u_\epsilon\in\mathscr{N}}I^{(\epsilon)}[u_\epsilon]=\Xi(\bar{u}_\epsilon,\bar{\zeta}_\epsilon)=\displaystyle\max_{\zeta_\epsilon\in\mathscr{V}_{\epsilon}}I_d^{(\epsilon)}[\zeta_\epsilon]=I_d^{(\epsilon)}[\bar{\zeta}_\epsilon]\
\text{for}\ p\in(2,\infty). \eneq
\end{lem}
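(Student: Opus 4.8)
The plan is to treat this as the fundamental ``perfect duality'' statement of the canonical dual transformation and to organize the argument into three stages: first, an explicit construction of a critical pair $(\bar u_\epsilon,\bar\zeta_\epsilon)$ of the total complementary energy $\Xi$; second, the observation that each one-variable slice of $\Xi$ reproduces one of the two energies $I^{(\epsilon)}$ and $I^{(\epsilon)}_d$; and third, a convexity analysis that upgrades criticality to global optimality and yields the value identities (26)--(27). The organizing principle throughout is that the sign of $p-2$ governs the curvature of $\Psi_\ast$, and hence whether the dual energy is convex (a minimum, for $p\in(1,2)$) or concave (a maximum, for $p\in(2,\infty)$).

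For the construction I would first solve the conservation law $\mathrm{div}\,\overrightarrow{\theta_\epsilon}+f=0$ of Lemma 2.4 under radial symmetry: writing $\overrightarrow{\theta_\epsilon}=\theta_\epsilon(r)\hat r$ turns it into a first-order ODE in $r$ whose integration yields $|\overrightarrow{\theta_\epsilon}|=F_\epsilon(r)\,r$ with $F_\epsilon$ exactly as in (13), the one free constant being $C_\epsilon$. Substituting $|\overrightarrow{\theta_\epsilon}|^2=F_\epsilon^2 r^2$ into the Dual Algebraic Equation (24) and inverting the strictly monotone $E_{\epsilon,p}$ gives the unique $\lambda_\epsilon=E_{\epsilon,p}^{-1}(F_\epsilon^2 r^2)$ and hence $\bar\zeta_\epsilon=\lambda_\epsilon/2$; one then verifies $\bar\zeta_\epsilon\in\mathscr{V}_\epsilon$, noting in the range $p\in(1,2)$ that the endpoint $\epsilon^{p-2}/2$ is attained precisely where $\overrightarrow{\theta_\epsilon}=0$, i.e. on the degenerate set $\{\nabla\bar u_\epsilon=0\}$. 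Finally $\overrightarrow{\eta_\epsilon}=\overrightarrow{\theta_\epsilon}/\lambda_\epsilon$ is path-independent, so the line integral (25) collapses to the radial formula (12), and imposing the outer condition $\bar u_\epsilon(R_2)=0$ fixes $C_\epsilon$ through the implicit relation (33); its sign reflects $\mathrm{Sgn}(f)$ and thereby the sign of $\bar u_\epsilon$.

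For the optimality and the identities I would exploit that, by construction, $I_d^{(\epsilon)}[\zeta_\epsilon]=\Xi(\bar u_\epsilon,\zeta_\epsilon)$ with $\bar u_\epsilon$ the Euler-Lagrange solution. Since the pointwise constraint defining $\mathscr{V}_\epsilon$ decouples the $\zeta_\epsilon$-optimization over $x$, a direct sign check of $\Psi_\ast''$ shows $\Xi(\bar u_\epsilon,\cdot)$ is convex for $p\in(1,2)$ and concave for $p\in(2,\infty)$, with $\bar\zeta_\epsilon$ its stationary point by (24); hence $\bar\zeta_\epsilon$ is the global minimizer of (21) (resp. maximizer of (22)). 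The link between the two extremal values comes from the Legendre involution: extremizing $\Xi(\bar u_\epsilon,\cdot)$ pointwise returns $\Psi(\Phi_\epsilon(\bar u_\epsilon))=H_{\epsilon,p}(\nabla\bar u_\epsilon)$, so that the extremal value of $I_d^{(\epsilon)}$ equals $I^{(\epsilon)}[\bar u_\epsilon]$, the extremum being a maximum for $p>2$ and a minimum for $p<2$. Since the primal integrand is convex in $\nabla u_\epsilon$, the Euler-Lagrange solution $\bar u_\epsilon$ is automatically the global minimizer of $(\mathscr{P}^{(\epsilon)})$ over $\mathscr{N}$, and strict convexity together with the injectivity of $E_{\epsilon,p}$ furnishes uniqueness; chaining these equalities produces (26) for $p\in(1,2)$ and (27) for $p\in(2,\infty)$, the common value being attained at the critical pair (which for $p>2$ is a genuine saddle of $\Xi$).

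The hard part will be the explicit calibration step rather than the abstract duality: one must prove that the implicit outer-boundary equation (33) admits a unique \emph{positive} root $C_\epsilon$ for each sign of $f$, and that the resulting closed form (12) indeed defines an element of $\mathscr{N}=W_0^{1,p}(\Omega)\cap C(\overline{\Omega})$ --- in particular that the integrand $F_\epsilon(\rho)\rho/E_{\epsilon,p}^{-1}(F_\epsilon^2\rho^2)$ is integrable and continuous up to $\overline{\Omega}$. The most delicate point lies in the singular regime $p\in(1,2)$, where $\bar\zeta_\epsilon$ meets the boundary $\epsilon^{p-2}/2$ of $\mathscr{V}_\epsilon$ exactly on the degenerate set: there one must confirm that the inverse $E_{\epsilon,p}^{-1}$ and the recovery identity remain valid, so that the constraint being active does not spoil either the analyticity of $\bar u_\epsilon$ or its global minimality. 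These quantitative checks, together with the sign analysis of $C_\epsilon$, are what ultimately fasten the formal dual construction to an honest solution of the Dirichlet problem (3).
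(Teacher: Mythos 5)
Your proposal is correct and follows the same overall strategy as the paper: construct the critical pair explicitly by solving $\mathrm{div}\,\overrightarrow{\theta_\epsilon}+f=0$ radially to get $F_\epsilon$ as in (13) and (29), invert the Dual Algebraic Equation (24) through the monotone $E_{\epsilon,p}$ to recover $\lambda_\epsilon=2\bar\zeta_\epsilon$, fix $C_\epsilon$ from $\bar u_\epsilon(R_2)=0$ via the monotone function $M_\epsilon$ of (31)--(33), and then verify second-order conditions. Where you diverge is in the optimality stage: the paper computes the explicit second variations (34) and (35) at the critical pair and checks their signs (using $|\nabla\bar u_\epsilon\cdot\nabla\phi|^2\le|\nabla\bar u_\epsilon|^2|\nabla\phi|^2$ for the primal, and the DAE for the dual), whereas you argue through the pointwise convexity or concavity of $\Psi_\ast$ (governed by the sign of $p-2$) together with the Legendre involution $\Psi_\ast(\bar\zeta_\epsilon)=\xi_\epsilon\bar\zeta_\epsilon-\Psi(\xi_\epsilon)$ to collapse the extremal value of $I^{(\epsilon)}_d$ back onto $I^{(\epsilon)}[\bar u_\epsilon]$. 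Your route has the advantage of delivering the full chain of equalities in (26)--(27) and genuine \emph{global} (not merely local second-order) extremality more transparently than the paper's pointwise sign check of $\delta^2$; the paper's route has the advantage of exhibiting the curvature formulas explicitly. Note that both you and the paper leave the same load-bearing step unproved in detail, namely the strict monotonicity of $M_\epsilon(y)$ in $y$ guaranteeing a unique positive root $C_\epsilon=M_\epsilon^{-1}(0)$: the paper defers this to the reference of Wu--Lu, and you flag it as the hard part without carrying it out, so on that point your proposal is no less complete than the published argument.
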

\begin{rem}
Lemma 2.7 shows that the minimization(maximization) of the pure
complementary energy functional $I_d^{(\epsilon)}$ is perfectly dual
to the minimization of the potential energy functional
$I^{(\epsilon)}$ for $p\in(1,2)$($p\in(2,\infty)$). Indeed, both
identities (26) and (27) indicate there is no duality gap between
them.
\end{rem}
\begin{proof}
The proof is divided into two parts. In the first parts, we discuss
the uniqueness of $\overrightarrow{\theta_\epsilon}$ for both ${\rm
Sgn}(f)=1$ and ${\rm Sgn}(f)=-1$. Global extrema will be
studied in the second part.\\
\\
{\it First Part:}\\

Let $O=(a_1,a_2,\cdots,a_n)$. Actually, due to the radial symmetry
of the solution $\bar{u}_\epsilon$ of (3), one has, \beq
\overrightarrow{\theta_\epsilon}={F}_\epsilon(r)(x_1-a_1,\cdots,x_n-a_n)={F}_\epsilon\Big(\sqrt{\sum_{i=1}^n(x_i-a_i)^2}\Big)(x_1-a_1,\cdots,x_n-a_n),
\eneq where ${F}_\epsilon(r)$ in (13)
is a general solution of the nonhomogeneous linear differential
equation \beq {F}_\epsilon'(r)+n{F}_\epsilon(r)/r=-f(r)/r,\ \ \
r\in(R_1,R_2). \eneq Recall that $\bar{u}_\epsilon(R_1)=0$, as a
result, \beq
\bar{u}_\epsilon(r)=\int_{R_1}^r\Big(R_1^nC_\epsilon+\int_{\rho}^{R_1}f(r)r^{n-1}
dr\Big)/\Big(\rho^{n-1}\lambda_\epsilon(\rho)\Big)d\rho, \ \ \ \
r\in(R_1,R_2). \eneq Recall that
\[
\bar{u}_\epsilon(R_2)=0,
\]
and one can determine the positive constant $C_\epsilon$ uniquely.
Indeed, let \beq \mu_\epsilon(r,y):=\Big({\rm Sgn}(f)y
R_1^n+\int_r^{R_1}f(\rho)\rho^{n-1}
d\rho\Big)/\Big(r^{n-1}\lambda_\epsilon(r,y)\Big), \eneq
\beq M_{\epsilon}(y):=\int^{R_2}_{R_1}\mu_\epsilon(\rho,y)d\rho,
\eneq
where $\lambda_\epsilon(r,y)$ is from (24). It is evident that
$\lambda_\epsilon$ depends on $C_\epsilon$. As a matter of fact,
similar as the discussion in \cite{LW}, one can check that, if ${\rm
Sgn}(f)=1$, then $M_\epsilon$ is strictly increasing with respect to
$y\in(0,\infty)$; while if ${\rm Sgn}(f)=-1$, then $M_\epsilon$ is
strictly decreasing with respect to $y\in(0,\infty)$. As a result,
for both cases, one has \beq C_\epsilon=M_{\epsilon}^{-1}(0). \eneq
The uniqueness of the solution $\bar{u}_\epsilon$ is concluded.\\
\\
{\it Second Part:}\\

On the one hand, for any test function $\phi\in
W_0^{1,\infty}(\Omega)$, the second variational form
$\delta_\phi^2I^{(\epsilon)}$ is equal to\beq
\int_\Omega\Big\{(p-2)\Big(\Big(|\nabla
\bar{u}_\epsilon|^2+\chi(p)\epsilon^2\Big)^{(p-4)/2}|\nabla\bar{u}_\epsilon\cdot\nabla\phi|^2\Big)+\Big(|\nabla
\bar{u}_\epsilon|^2+\chi(p)\epsilon^2\Big)^{(p-2)/2}|\nabla\phi|^2\Big\}dx.\eneq
On the other hand, for any test function
$\psi\in\mathscr{V}_{\epsilon}$, the second variational form
$\delta_\psi^2I_d^{(\epsilon)}$ is equal to
\beq-1/2\int_\Omega\Big\{|\overrightarrow{\theta_\epsilon}|^2/\bar{\zeta}_\epsilon^3+8\cdot(2\bar{\zeta}_\epsilon)^{(4-p)/(p-2)}/(p-2)\Big\}\psi^2dx.
\eneq From (34), by keeping in mind the fact
\[|\nabla\bar{u}_\epsilon\cdot\nabla\phi|^2\leq|\nabla\bar{u}_\epsilon|^2|\nabla\phi|^2,\] one deduces
immediately that, for $p\in(1,2)\cup(2,+\infty)$, \beq \delta^2_\phi
I^{(\epsilon)}(\bar{u}_\epsilon)\geq0. \eneq From (35), taking the
definition of $\overrightarrow{\theta_\epsilon}$ into account, one
has \beq\delta_\psi^2 I_d^{(\epsilon)}(\bar{\zeta}_\epsilon)\geq0,\
\  p\in(1,2)\eneq and \beq \delta_\psi^2
I_d^{(\epsilon)}(\bar{\zeta}_\epsilon)\leq0,\ \ p\in(2,+\infty).
\eneq
\end{proof}
Summarizing the above discussion, one proves Theorem 1.2.\\
\\
{\bf Acknowledgment}: This project is partially supported by US Air
Force Office of Scientific Research (AFOSR FA9550-10-1-0487),
Natural Science Foundation of Jiangsu Province (BK 20130598),
National Natural Science Foundation of China (NSFC 71273048,
71473036, 11471072), the Scientific Research Foundation for the
Returned Overseas Chinese Scholars, Fundamental Research Funds for
the Central Universities on the Field Research of Commercialization
of Marriage between China and Vietnam (No. 2014B15214). This work is
also supported by Open Research Fund Program of Jiangsu Key
Laboratory of Engineering Mechanics, Southeast University
(LEM16B06).


\begin{thebibliography}{99}
\bibitem{CA} C. Azizieh, A priori estimates and continuation methods for
positive solutions of $p$-Laplace equations, J. Differential
Equations 179(2002), 213-245.
\bibitem{A}
B. Avelin, N. L. P. Lundstr\"{o}me and K. Nystr\"{o}m, Boundary
estimates for solutions to operators of $p-$Laplace type with lower
order terms, J. Differential Equations 250(2011), 264-291.
\bibitem{WB}
J. W. Barrett and W. B. Liu, Finite element approximation of the
$p-$Laplacian, Mathematics of Computation, 61(1993), 523-537.
\bibitem{SB} P. Dr\`{a}bek, S. B. Robinson, Resonance problems for the $p$-Laplacian,
J. Funct. Anal. 169(1999), 189-200.
\bibitem{MD} M. Dobrowolski, On finite element methods for nonlinear elliptic problems on domains with corners, Singularities and constructive
methods for their treatment (Oberwolfach, 1983), Lecture Notes in
Math., 1121, Springer, Berlin, (1985), 85-103.
\bibitem{Evans} L. C. Evans, Partial Differential Equations.
Graduate Studies in Mathematics, Vol. {\bf 19}, 2002.
\bibitem{Evans1} L. C. Evans, Partial differential equations and
Monge-Kantorovich mass transfer(survey paper).
\bibitem{AM} R. Glowinski and A. Marrocco, Sur
l'approximation par \'{e}l\'{e}ments finis d'ordre un, et la
r\'{e}solution, par p\'{e}nalisation-dualit\'{e}, d'une classe de
probl\`{e}mes de Dirichlet non lin\'{e}aires, RAIRO Anal.
Num\'{e}r., 2(1975), 41-76.
\bibitem{G6}
D. Y. Gao and R. W. Ogden, Multiple solutions to non-convex
variational problems with implications for phase transitions and
numerical computation, Q. Jl Mech. Appl. Math. 61(4), 2008.
\bibitem{JH} J. Hor\'{a}k, Overdetermined boundary value problems for the $p$-Laplacian, Electronic Journal of
Differential Equations, 132(2011), 1-30.
\bibitem{BK} B. Kawohl, Variations on the p-Laplacian, preprint.
\bibitem{AH} A. A. Kilbas, H. M. Srivastava and J. J. Trujillo, Theory and Applications of Fractional Differential Equations,
Vol. 204, Elsevier Science, 2006.
\bibitem{BK} B. Kawohl, Variations on the $p$-Laplacian, in Nonlinear Elliptic Partial
Differential Equations, Eds. D. Bonheure, P. Takac et al.,
Contemporary Mathematics 540(2011), 35-46.
\bibitem{LIONS1} J. L. Lions, Quelques M\'{e}thodes de R\'{e}solution des Probl\`{e}mes aux Limites Non-lin\'{e}aires,
Dunod, Paris, 1969.
\bibitem{LIONS} J. L. Lions and E. Magenes, Probl\`{e}mes aux
limites non homog\`{e}nes et applications I-III, Dunod, Paris,
1968-1970.
\bibitem{L1} F. Li and Z. Yang, Existence of positive solutions of
singular $p-$Laplacian equations in a ball, J. Nonlinear Sci. Appl.,
5(2012), 44-55.
\bibitem{L2} V. Liskevich, S. Lyakhova and V. Moroz, Positive
solutions to nonlinear $p-$Laplace equations with Hardy potential in
exterior domains, J. Differential Equations 232(2007), 212-252.
\bibitem{LU} X. Lu, On Hyperbolic/Parabolic $\sigma$-Evolution
Equations, Doctoral Thesis, 2010.
\bibitem{GL} G. Lieberman, Boundary regularity for solutions of degenerate
elliptic equations, Nonlinear Analysis, 12(1988), 1203-1220.
\bibitem{JM} J. Manfredi,
$P$-harmonic functions in the plane, Proc. Amer. Math. Soc.
103(1988), 473-479.
\bibitem{SS} S. Sakaguchi, Concavity properties
of solutions of some degenerate quasilinear elliptic Dirichlet
problems, Ann. Sc. Norm. Sup. Pisa(IV), 14(1987), 404-421.
\bibitem{U} K. Uhlenbeck, Regularity for a class of nonlinear elliptic systems,
Acta. Math., 138(1977), 219-240.
\bibitem{LW} Y. Wu and X. Lu, Canonical duality approach in the approximation of optimal Monge mass transfer
mapping, preprint.
\bibitem{Z} Oleg Zubelevich, An elliptic
equation with perturbed $p$-Laplace operator, J. Math. Anal. Appl.
328(2007), 1188-1195.



\end{thebibliography}
\end{document}